\theoremstyle{theorem}
\newtheorem*{main}{Main Theorem}
\newcommand{\citemain}{The Main Theorem}
\newtheorem{thm}{Theorem}[section]
\newtheorem{lem}[thm]{Lemma}
\theoremstyle{definition}
\DeclareMathAlphabet{\mathsc}{OT1}{cmr}{m}{sc}
\newcommand\cA{{\mathscr A}}
\newcommand\cB{{\mathscr B}}
\newcommand\cC{{\mathscr C}}
\newcommand\cI{{\mathcal I}}
\newcommand\cK{{\mathscr K}}
\newcommand\cL{{\mathscr L}}
\newcommand\cP{{\mathscr L}}
\newcommand\cS{{\mathcal S}}
\newcommand\bC{{\mathbb C}}
\newcommand\bN{{\mathbb N}}
\newcommand\bR{{\mathbb R}}
\newcommand{\osc}[2]{\operatorname{osc}\left[#1, #2   \right]}
\newcommand{\ball}[2]{{B}_{#1}( #2)}
\newcommand{\Oo}{\Omega} 
\newcommand{\flo}[1]{\Phi^{#1}} % the flow
\newcommand{\Lspace}[2]{\mathbf{L^{#1}}(#2)} %L^{p} space
\newcommand{\indicator}[1]{\mathbf{1}_{#1}} %indicator function of a set
\newcommand{\Id}{\mathbf{id}}   %the identity operator
\newcommand{\Cor}[2]{\operatorname{\xi}}   %correlation function 
\newcommand{\norm}[1]{\left\lVert{#1}\right\rVert}
\newcommand{\abs}[1]{\left\lvert{#1}\right\rvert}
\newcommand{\LL}[3]{{\lVert{#2}\rVert}_{\mathbf{L^{#1}}(#3)}}
\newcommand{\Aspace}{\mathfrak{B}_{\alpha}}   %the banach space central to this work
\newcommand{\map}{f}
\newcommand\esssup{\operatorname{ess\, sup}\displaylimits}
\numberwithin{equation}{section}
\begin{document}
\author{Oliver Butterley}
\email{oliver.butterley@univie.ac.at}
\address{Faculty of Mathematics, Universit\"at Wien,
Nordbergstra{\ss}e 15,
1090 Wien, Austria}
\title{Area expanding $\cC^{1+\alpha}$ Suspension Semiflows}
%\keywords{}
%\subjclass[2000]{Primary:  37D50  ;   Secondary: 47D06,  37A05,  37E35}
\thanks{It is a pleasure to thank Carlangelo Liverani for many helpful discussions and comments. Research partially supported by the ERC Advanced Grant MALADY (246953).}
\date{  \today}
\bibliographystyle{abbrv}
\maketitle
\thispagestyle{empty}
\begin{abstract}
We study a large  class of suspension semiflows which contains the Lorenz semiflows. This is a class with low regularity (merely $\cC^{1+\alpha}$) and where the return map is discontinuous and the return time is unbounded.
We establish the functional analytic framework which is typically employed to study rates of mixing. The Laplace transform of the correlation function is shown to admit a meromorphic extension to a strip about the imaginary axis. 
As part of this argument we give a new result concerning the quasi-compactness of weighted transfer operators for piecewise $\cC^{1+\alpha}$ expanding interval maps.
\end{abstract}

\section{Introduction}
Some dynamical systems exhibit very good statistical properties in the sense of, for example, exponential decay of correlation and  the stability of the invariant measure under deterministic or random perturbations.  Such properties have been shown for many discrete-time dynamical systems and more recently for some flows. Very strong results now exist for smooth contact Anosov flows \cite{D,Li1,BL,tsujii2008qct,Tsujii:2011uq,Giulietti:fk}.
Good results also exist for suspension flows over uniformly-expanding Markov maps when the system is $\cC^{2}$ or smoother \cite{Pol99,baladi2005edc,avila2005emt}.
The above are all rather smooth and regular systems and arguably not  realistic or relevant in many physical systems. 
There are two important  examples which come to mind: dispersing billiards \cite{MR2229799} and the Lorenz flow \cite{Lorenz}. The fine statistical properties of both these systems remain, to some extent,  open problems. 
We therefore direct our interest to systems with rather low regularity.  Some recent progress includes the proof of exponential mixing for piecewise-cone-hyperbolic contact flows
\cite{Baladi:2011uq} and also for a class of three-dimensional singular flows \cite{Araujo:2011fk}.

This is our theme: To make some progress on the understanding of the fine results on statistical properties of systems with low regularity. 
The primary motivation for this study is the Lorenz flow mentioned above. This is a smooth three-dimensional singular hyperbolic flow. The work of Ara\'ujo and Varandas \cite{Araujo:2011fk} proved exponential decay of correlation for a class of volume-expanding flows with singularities, a class which is inspired by the Lorenz flow. However their method required the existence of a $\cC^{2}$ stable foliation. Unfortunately the stable foliation of the Lorenz flow is merely $\cC^{1+\alpha}$ and so there seems to be no hope of extending their strategy to the original problem. 
The problem of the stable foliations being merely $\cC^{1+\alpha}$  for Lorenz like flows has been partially tackled by Galatolo and Pacifico \cite{zezegala2010} followed by Ara\'ujo, Galatolo and Pacifico \cite{Araujo:2012fk} but results on decay of correlations are limited to the return map and not the flow. In this paper we make some progress in a complementary direction.
There exist two major strategies for approaching this problem: The first possible strategy is to construct an anisotropic Banach space in order to study the flow directly as was done for contact Anosov flows   \cite{Li1} and piecewise-cone-hyperbolic contact flows
\cite{Baladi:2011uq}. The second possible strategy is to study the \emph{Lorenz semiflow} (details given in Section~\ref{sec:lorenz}) which is given by quotienting along the stable manifolds. At this stage it is unclear how to construct the space required for the first possibility and we therefore consider the second. This however requires one to work with a system which is merely $\cC^{1+\alpha}$.

%\pagebreak
In this paper we focus on a particular class of  semiflows which are suspensions over expanding interval maps. This class includes the Lorenz semiflows. They have low regularity in the following four ways:
\begin{enumerate}
\item
The expansion of the return map may be unbounded. I.e. the derivative of the return map blows up close to certain points of discontinuity. This is an issue seen in both billiard systems and the Lorenz flow.
\item
The inverse of the derivative of the return map is merely H\"older continuous.
\item
The return time function is unbounded. This is a direct result of the zeros of the vector field associated to the flow. However in the case of certain suspension semiflows this has already been shown to not be a barrier to good statistical properties \cite{baladi2005edc}.
\item
The semiflow is merely area-expanding and not uniformly-expanding in the sense that it is not possible to define an invariant conefield which is uniformly transversal to the flow direction. This puts us in the category of singular hyperbolicity  \cite{morales1999shs, MR2123928}. 
\end{enumerate}
In order to study the class of flows considered in this paper, and other systems which are the object of current research, it is crucial to understand whether these above issues are real barriers to good statistical properties or merely technical difficulties. On this issue we succeed in making some progress in the present work showing that the listed issues are not real barriers to the statistical properties, at least in this setting.
For proving exponential decay of correlation for flows there is one established approach which involves studying the Laplace transform of the correlation function. We apply this strategy to our present setting and show that the Laplace transform of the correlation function admits a meromorphic extension into the left half plane. %We also reduce the problem of proving exponential decay of correlation to  particular estimate. However the proof of this remaining estimate is far from trivial.
In Section~\ref{sec:results} we define precisely the class of semiflows we are interested in and state the results. In Section~\ref{sec:lorenz} we discuss Lorenz flows and demonstrate  the connection with the class of semiflows we consider. 
In Section~\ref{sec:gbv} we give a generalisation of the result of Keller on ``generalised bounded variation'' \cite{keller1985generalized} such that it is possible to apply to  our present application. This is a new result for the essential spectral radius of such transfer operators for these piecewise expanding interval maps and the section is independent of the others.
Section~\ref{sec:proof} contains the proof of the main result, reducing the problem to the study of certain weighted transfer operators and then using the results of Section~\ref{sec:gbv}.

\section{Results}\label{sec:results}

For our purposes we define a \emph{suspension semiflow} to be the triple $(\Oo, \map, \tau)$: Where
$\Oo$ is an open interval and 
 $\{\omega_{i}\}_{i\in \cI}$ is a finite or countable set of disjoint open sub-intervals 
which exhaust $\Oo$ modular a set of zero Lebesgue measure;
Where  $f \in \cC^{1}(\tilde\Oo, \Oo)$ (for convenience let $\tilde\Oo = \bigsqcup_{i\in \cI} \omega_{i}$)
 is a bijection when restricted to each $\omega_{i}$; And where $\tau\in\cC^{0}(\Oo,\bR_{+})$ such that $\int_{\Oo} \tau(x) \ dx <\infty$. In a moment we will add some stronger assumptions on the regularity of $\map$ and $\tau$.
We call $f$  the return map and $\tau$ the return time function.
Suppose that   $(\Oo, \map, \tau)$  is given. Let $\Oo_{\tau}:=\{(x,s) : x\in \tilde\Oo, 0\leq s<\tau(x)\}$  which we call the state space. For all $(x,s)\in \Oo_{\tau}$ and $t\in [0, \tau(x)-s]$ let
\begin{equation}\label{eq:defphi}
\flo{t}(x,s) := \begin{cases}
(x,s+t) & \text{if $t < \tau(x)-s$}\\
(\map (x),0) & \text{if $t = \tau(x)-s$}.
\end{cases}
\end{equation}
Note that $\flo{u+t}(x,s) = \flo{u} \circ \flo{t}(x,s)$ for all $u,t$ such that each term is defined. The flow is then defined for all $t\geq 0$ by assuming that this relationship continues to hold.
%We say that a suspension semiflow is \emph{area-expanding} if there exists $\lambda_{0}>0$ such that
%\begin{equation}\label{eq:deflambda}
%\lambda(x) := \frac{ \ln {\abs{f'(x)}} } {\tau(x)} \geq \lambda_{0} \quad \text{for all $x\in \Oo$}.
%\end{equation}

 Now we define the class of suspension semiflows which we will study. Firstly we require that the return map is expanding, i.e. that\footnote{In general it is sufficient to suppose that there exists $n\in \bN$ such that $\smash{\LL{\infty}{1/(\map^{n})'}{\Oo} <1}$. In which case one simply considers the $n$\textsuperscript{th} iterate of the suspension flow and proceeds as before, although care must be taken with assumption \eqref{eq:holderass}.} $\LL{\infty}{1/\map'}{\Oo} <1$.  We suppose that there exist some $\alpha\in (0,1)$ and $\sigma>0$ such that the following three conditions hold.
 Firstly we must have some, albeit weak, control on the regularity. We assume that\footnote{
 We say that some $\xi:\Oo\to \bC$ is  ``$\alpha$-H\"older   on $\Oo$'' if there exists $H_{\xi}<\infty$ such that $\abs{\xi(x)-\xi(y)} \leq H_{\xi} \abs{x-y}^{\alpha}$ for all $x,y\in\Oo$ with the understanding that this inequality  is trivially satisfied if $x\in \omega_{i}$, $y\in \omega_{i'}$, $i\neq i'$ since in this case $\abs{x-y}$ is not finite. Note that  $H_{\xi}$ does not depend on $i$.
 }  
 \begin{equation}\label{eq:holderass}
 x\mapsto \frac{e^{z \tau(x)}}{f'(x) }\quad \quad \text{is $\alpha$-H\"older  on $\tilde\Oo$ for each $\Re(z) \in [-\sigma,0]$}.
 \end{equation}
 Furthermore we must require sufficient expansion in proportion to the return time. We assume that
 \begin{equation}\label{eq:expandingass}
 \sup_{i\in \cI} \left({\LL{\infty}{\tfrac{1}{\map'}}{\omega_{i}}}\right)^{\alpha}e^{{\sigma} \LL{\infty}{\tau}{\omega_{i}} } <1
 \end{equation}
 Finally, to deal with the possibility of a countable and not finite number of disconnected components of $\Oo$, we assume that
 \begin{equation}\label{eq:sumass}
  \sum_{i\in \cI} \LL{\infty}{\tfrac{1}{\map'}}{\omega_{i}} e^{\sigma \LL{\infty}{\tau}{\omega_{i}} } <\infty.
 \end{equation}
 Note that we never require any lower bound on $\tau$.
Let $\nu$ denote some $\map$-invariant probability measure which is absolutely continuous with respect to Lebesgue on $\Oo$.  The existence of such a probability measure is already known but is also implied by the results of Section~\ref{sec:gbv}.
For simplicity we assume that this absolutely continuous invariant probability measure is unique.
It holds that $\mu :=\nu \otimes \operatorname{Leb} / \nu(\tau)$ is a $\flo{t}$-invariant probability measure which is absolutely continuous with respect to Lebesgue on $\Oo_{\tau}$.
Given $u, v:\Oo_{\tau } \to \bC$ which are $\alpha$-H\"older  we define for all $t\geq 0$ the correlation 
\[
\Cor{u}{v}(t) := \mu( u \cdot v\circ \flo{t} ) - \mu(u) \cdot \mu(v).
\]
\begin{main}\label{thm:main}
Suppose the suspension semiflow is as described above, in particular satisfying the assumptions \eqref{eq:holderass}, \eqref{eq:expandingass}, and \eqref{eq:sumass}.
Then the Laplace transform of the correlation $\widehat{\Cor{u}{v}}(z) := \int_{0}^{\infty} e^{-zt} \Cor{u}{v}(t) \ dt$ admits a meromorphic extension to the set $\{z\in \bC: \Re(z) \geq -\sigma\}$.
\end{main}
\noindent
The proof of this theorem is given in Section~\ref{sec:proof} and is based on the results of  Section~\ref{sec:gbv}. The argument involves the usual method of ``twisted transfer operators'' but for this setting we require a generalisation of Keller's previous work \cite{keller1985generalized} on $\cC^{1+\alpha} $ expanding interval maps which is the content of  Section~\ref{sec:gbv}.

Let us recall in detail some closely related results which were mentioned in the introduction. Baladi and Vall\'ee \cite{baladi2005edc} (argument later extended to higher dimensions \cite{avila2005emt} by Avila, Yoccoz and Gou\"ezel) studied suspension semiflows which had  return maps which were Markov and also $\cC^{2}$. They allowed the return time to be unbounded but only in a mild way as they required $\tau'/\map'$ to be bounded. As part of the study of Lorenz-like flows Ara\'ujo and Varandas \cite{Araujo:2011fk} studied suspension semiflows very similar to the present setting but had to additionally require that the return map was $\cC^{2}$ rather than our weaker assumption of $\cC^{1+\alpha}$. We therefore see that our setting is more general and sufficiently general to be used for the study of Lorenz flows (see Section~\ref{sec:lorenz}). However in each of the above mentioned cases exponential decay of correlation is proven, a significantly stronger result that is proven in this present work. To obtain results on exponential decay of correlation in would be additionally necessary to have an oscillatory cancelation argument as pioneered by Dolgopyat \cite{D}. But since we already have the Lasota-Yorke inequality which we prove in Section~\ref{sec:gbv}, it is suffices to prove this stronger estimate in the $\mathbf{L^{1}}$ norm. All indication therefore suggests that, although far from trivial, there is hope of proving such an estimate.

\section{Lorenz Semiflows}\label{sec:lorenz}
Introduced in 1963 as a simple model for weather, the Lorenz flow \cite{Lorenz}   is a smooth three dimensional flow which, from numerical simulation, appeared to exhibit a robust chaotic attractor.
In the late 1970s 
Afra\u{\i}movi\v{c},
  Bykov and Silnikov  \cite{MR0462175}
 and Guckenheimer and Williams 
 \cite{MR556582,MR556583}
  introduced a geometric model of the Lorenz flow and some years later, in 2002, 
Tucker  \cite{T02} showed that the geometric Lorenz flow really was a representative model for the original Lorenz flow and hence showed that the Lorenz attractor really did exist.

 This flow has long proved elusive to thorough study. It is not uniformly hyperbolic.
The class of \emph{singular hyperbolic flows} was introduced and studied in the late 1990s by   Morales, Pacifico, Pujals \cite{morales1999shs, MR2123928,MR1649489}.
This class of flows contains the uniform hyperbolic flows and also contains the Lorenz attractor. 
Whereas the uniformly hyperbolic flows are  the flows which are structurally stable as shown by Hayashi   \cite{MR1432037,MR1715329},   the singular hyperbolic flows are  the flows which are stably transitive.
It is known that singular hyperbolic flows are chaotic in that they are expansive and admit an SRB measure   \cite{APPV}. Some further results are known limited to the particular case of the Lorenz attractor. It is know to be mixing \cite{LuzzMelPac2005} and that the Central Limit Theorem  and Invariance Principle hold  \cite{holland2007clt}. 
As mentioned earlier, a class of Lorenz-like flows has been shown to mix exponentially 
\cite{Araujo:2011fk} although this result is limited to such flows which have $\cC^{2}$ stable foliations, a property which cannot be expected to hold in general or for the original Lorenz flow.
 
 To show that the Lorenz flow reduces to a suspension semiflow of the class introduced in Section~\ref{sec:results} it is necessary to collect together some known facts. This is a procedure similar to the one described in  \cite{LuzzMelPac2005}.
Firstly it is known that stable manifolds exist for the Lorenz attractor and they are $\cC^{1+\gamma} $ for some $\gamma\in (0,1)$. 
Quotienting along these stable manifolds  the three dimensional  Lorenz flow may be reduced to a $\cC^{1+\gamma} $ area expanding semiflow on a two dimensional branched manifold where the equilibrium point is (or indeed, all the equilibrium points are) after a $\cC^{1+\gamma} $ change of coordinates to linearise close to the singularity, of the form
\[
\phi_{t}: (x,y) \mapsto (xe^{\lambda t},y e^{-\beta \lambda t}),
\]
where $\lambda>0$ and $\beta \in (0,1)$.
The equilibrium point of the Lorenz flow is of saddle type with two negative eigenvalues and one positive eigenvalue. We write the eigenvalues as $\zeta_{ss} < \zeta_{s} <0 < \zeta_{u}$.  The dominated splitting implies that $\zeta_{u} > \abs{\zeta_{s}}$. This is the origin of the above two coefficients. We have  $\lambda= \zeta_{u}$ and $\beta = \abs{\zeta_{s}} / \zeta_{u}$.

We now choose a suitable Poincar\'e section (made of, perhaps, many disconnected components) and thereby reduce the semiflow to a suspension described by a return map and return time function. Away from the regions containing  an equilibrium point this is simple and yields a $\cC^{1+\gamma}$ return map and return time function where the return time function is bounded. Closer to the singularities the construction is slightly more delicate.
For $x\in (0,1)$ let
\[
\tau(x):= -\tfrac{1}{\lambda} \ln (x), \quad \quad \map(x):= x^{\beta}.
\]
These definitions have the useful consequence that  $\phi_{\tau(x)}:(x,1)\mapsto (1,\map(x))$ for all $x\in (0,1)$ and that $\phi_{\tau(x)}:\{(x,1):x\in(0,1)\} \to \{(1,y):y\in(0,1)\}$ is a bijection. It is convenient to further subdivide these components of the Poincar\'e section and so for each $i\in \bN$ let $\omega_{i}:= (e^{-(i+1)},e^{-i})$. 
We must verify that the conditions \eqref{eq:holderass}, \eqref{eq:expandingass}, and \eqref{eq:sumass} are satisfied for this suspension semiflow. 
We choose $\alpha:=\min \{ \gamma, (1-\beta)/(2-\beta)\}$ and $\sigma>0$ such that 
\begin{equation}\label{eq:green}
\sigma< \alpha\lambda(1-\beta),
\end{equation}
 the larger the better. Note  this implies that $\alpha \in (0,\frac{1}{2})$ and that
\begin{equation}\label{eq:blue}
\sigma\leq \lambda(1-\beta-\alpha).
\end{equation}
Let $\Re(z)\in [-\sigma,0]$. First note that
\[
\frac{e^{-z\tau(x)}}{\map'(x)} = x^{\frac{z}{\lambda} + 1 -\beta}
\]
and that $\Re(z/\lambda + 1 -\beta)\geq -\sigma/\lambda + 1 -\beta \geq \alpha$ by \eqref{eq:blue}.
Note that $y^{\zeta} - x^{\zeta} = \zeta \int_{x}^{y} s^{\zeta-1} \ ds$ for all $x,y\in \bR$ and so for   $y\geq x$ we have
\[
{\smash{y^{\zeta} - x^{\zeta}}}
\leq \abs{\zeta} \int_{x}^{y} s^{\Re(\zeta)-1} \ ds
=  \frac{\abs{\zeta}}{ \Re(\zeta)} ({ \smash{y^{\Re(\zeta)}- x^{\Re(\zeta)}}  }).
\] 
Consequently $x\mapsto {e^{z\tau(x)}}/{\map'(x)} $ is $\alpha$-H\"older on $(0,1)$. We must now show that the other two estimates hold.
We have the simple estimates
\[
\LL{\infty}{\tfrac{1}{\map'}}{\omega_{i}} = e^{-i(1-\beta)},
\quad \quad
\LL{\infty}{\tau}{\omega_{i}} =\tfrac{1}{\lambda} (i+1).
\]
This means that
\[
\LL{\infty}{\tfrac{1}{\map'}}{\omega_{i}}^{\alpha}e^{{\sigma} \LL{\infty}{\tau}{\omega_{i}} } 
=e^{ -i[\alpha(1-\beta) - \sigma/ \lambda] } e^{\sigma/ \lambda}
\]
and by \eqref{eq:green} we know that $\alpha(1-\beta) - \sigma/ \lambda>0$.
This means that it is simple to arrange that \eqref{eq:expandingass} and \eqref{eq:sumass} are satisfied. Unfortunately we are not quite done since we have not shown that $\LL{\infty}{1/\map'}{\Oo}<1$. We do know however that there exists $n\in \bN$ such that  $\LL{\infty}{1/(\map^{n})'}{\Oo}<1$. Consequently we instead consider the $n$\textsuperscript{th} iterate suspension semiflow. Conditions \eqref{eq:expandingass} and \eqref{eq:sumass} are still satisfied by the iterate. However care must be taken by the H\"older continuity assumption \eqref{eq:holderass}. It may happen that this is now only satisfied for some $\tilde\alpha\in (0,\alpha)$. Consequently this smaller value of $\alpha$ must be used from the start of the construction. Actually the return map is bounded in $\cC^{1}$ norm away from a neighbourhood of the equilibrium point and therefore with some knowledge of returns to the region of the equilibrium point a more optimal $\alpha$ could be choosen. 
The above estimates mean that the results of the previous section apply to the Lorenz semiflows.

We make a few more comments about this particular suspension semiflow.
This suspension semiflow presents the difficulty that the return time is not bounded but moreover $\tau' / f'$ is not bounded. (Such a condition is crucially required in  
 \cite{baladi2005edc, avila2005emt}.) This means that the flow is not uniformly expanding in the sense of there existing an invariant conefield, uniformly bounded away from the flow direction, inside of which there is uniform expansion. 
Lorenz semiflows as discussed above are our main application although we study a more general class of suspension semiflows.

\section{Generalised Bounded Variation}\label{sec:gbv}
We must consider the weighted transfer operators associated to expanding maps of the interval which have countable discontinuities and for which the inverse of the derivative and the weighting are  merely H\"older continuous. This means that we cannot study the transfer operator acting on any relatively standard spaces. One possibility is the generalised bounded variation introduced by Keller \cite{keller1985generalized} and used for expanding interval maps. However he does not consider the case when there are countable discontinuities and also does not consider the case of general weights.
Saussol \cite{MR1759406} used the same spaces for multi-dimensional expanding maps and showed that countable discontinuities are allowable but again did not study a general classes of weights and furthermore required the derivative of the map to be bounded. These are the spaces we will use in this section. Although not proven in the references with delicate estimates these spaces, as we will prove in this section, are useful for our application. Other possibly useful Banach spaces are available \cite{MR2784627, oli1201, 2012arXiv1207.3982L} but each suffers from some limitation which prevents the use in this present setting without imposing undesirable further conditions on the semiflow we wish to study. One particular problem is that we cannot guarantee that the weighting is bounded (see Section~\ref{sec:proof}) and consequently we cannot guarantee that the weighted transfer operator is bounded on $\mathbf{L^{1}}$.  Keller's Banach space of   \emph{generalised bounded variation}  \cite{keller1985generalized} is contained within $\mathbf{L^{\infty}}$, a distinct difference to the available alternatives  \cite{ oli1201, 2012arXiv1207.3982L}. This suggests the possibility that the transfer operator is bounded on this space even when not bounded on $\mathbf{L^{1}}$. In the remainder of this section we see that this speculation is shown to be correct.

\subsection{The Banach Space}
The following definitions are identical to  \cite{keller1985generalized} with minor changes of notation.
For any interval $\cS$  and $h:\cS \to \bC$ let
\[
\osc{h}{\cS} := \esssup\left\{\abs{h(x_{1})-h(x_{2})}: x_{1},x_{2}\in \cS   \right\}
\]
where the essential supremum is taken with respect to Lebesgue measure on $\cS^{2}$. Let $\ball{\epsilon}{x} := \{y\in \bR: \abs{x-y}\leq \epsilon\}$.
If $\alpha \in (0,1)$ and $\Oo$ is some finite or countable union of open intervals let
\begin{equation}\label{eq:defnorm}
\abs{h}_{\Aspace}:= \sup_{\epsilon \in (0,\epsilon_{0})} \epsilon^{-\alpha} \int_{\Oo} \osc{h}{\ball{\epsilon}{x}\cap \Oo} \ dx,
\end{equation}
where $\epsilon_{0}>0$ is some fixed parameter. 
Hence let
\[
\mathfrak{B}_{\alpha}:= \left\{ h\in \Lspace{1}{\Oo} : \abs{h}_{\Aspace}<\infty\right\}.
\]
The seminorm defined above will depend on $\epsilon_{0}>0$ although the sets $\mathfrak{B}_{\alpha}$ do not. It is known  \cite[Theorem 1.13]{keller1985generalized} that this set is a Banach space when equipped with the norm
\[
\norm{h}_{\Aspace}:= \abs{h}_{\Aspace} + \LL{1}{h}{\Oo},
\]
that $\Aspace \subset \Lspace{\infty}{\Oo}$, and that the embedding
\begin{equation}\label{eq:compact}
\Aspace \hookrightarrow \Lspace{1}{\Oo}
\quad \quad \text{is compact}.
\end{equation}
\subsection{Piecewise Expanding Transformations}
As before,
we suppose that
$\Oo$ is an open interval and 
 $\{\omega_{i}\}_{i\in \cI}$ is a finite or countable set of disjoint open sub-intervals 
which exhaust $\Oo$ modular a set of zero Lebesgue measure
(for convenience let $\tilde\Oo = \bigsqcup_{i\in \cI} \omega_{i}$)
and that we are given  
\[
\map \in \cC^{1}(\tilde\Oo, \Oo)
\]
 which is  bijective when restricted to each $\omega_{i}$.
We further suppose that we are given $\xi:\Oo\to \bC$ %, $ \LL{\infty}{\xi}{\Oo} <\infty$ 
which we call the \emph{weighting}.
We require that
\begin{equation}
 \label{eq:expanding}
 \LL{\infty}{1/ \map'}{\Oo} \in (0,1),
\end{equation}
furthermore that
\begin{equation}
 \label{eq:summing}
\sum_{i\in \cI}\LL{\infty}{1/ \map'}{\omega_{i}} \LL{\infty}{\xi}{\omega_{i}} <\infty,
\end{equation}
and finally that $\frac{\xi}{\map'}:\Oo \to \bC$ is $\alpha$-H\"older. I.e. there exists $H_{\xi}<\infty$ and $\alpha \in (0,1)$ such that 
\begin{equation}
 \label{eq:holder}
\abs{\tfrac{\xi}{\map'}(x) -\tfrac{\xi}{\map'}(y)}
  \leq H_{\xi} \abs{x-y}^\alpha \quad\quad \text{for all  $x,y\in \omega_i$ for each $i\in \cI$}. 
\end{equation}
For convenience let $\map_i:\omega_i \to \Oo$ denote the restriction of $\map$ to $\omega_i$.
As usual the weighted transfer operator  is given, for each $h:\Oo\to \bC$,  by\footnote{For any set, e.g. $A$, we let $ \indicator{A}$ denote the indicator function of that set.}
\begin{equation}\label{eq:defL}
\cL_{\xi}h(x):= \sum_{i\in\cI} \left(\frac{\xi \cdot h}{\map'}\right)\circ \map_{i}^{-1}(x)\cdot \indicator{\map\omega_{i}}(x).
\end{equation}
%In case of a finite partition the above sum would of course be a finite sum. 
% $\LL{\infty}{\cL_{\xi}}{\Oo}<\infty$ 
By \eqref{eq:summing} we know that $\cL_{\xi} :\Lspace{\infty}{\Oo} \to \Lspace{\infty}{\Oo} $ is well defined even thought, since we do not require $\LL{\infty}{\xi}{\Oo}<\infty$, we cannot guarantee that the operator is well defined on $\Lspace{1}{\Oo}$.

The purpose of this section is to prove the following new result which is a generalisation of the work of Keller \cite{keller1985generalized} to the case of countable discontinuities and unbounded weightings.
\begin{thm}\label{thm:qc}
Suppose the transformation $f:\Oo\to \Oo$ and the weighting $\xi:\Oo\to\bC$ are as above and satisfy \eqref{eq:expanding},   \eqref{eq:summing} and \eqref{eq:holder}. Then $\cL_{\xi}:\Aspace \to \Aspace$ is a bounded operator %has spectral radius not greater than $\LL{\infty}{\xi}{\Oo}$ and
with essential spectral radius not greater than  
\[
\lambda:= \sup_{i\in \cI} \LL{\infty}{1/\map'}{\omega_{i}}^{\alpha} \LL{\infty}{\xi }{\omega_{i}}.
\]
\end{thm}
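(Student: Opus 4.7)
The strategy is to prove a Lasota-Yorke inequality and then apply Hennion's quasi-compactness theorem together with the compact embedding $\Aspace \hookrightarrow \Lspace{1}{\Oo}$ from \eqref{eq:compact}. The target estimate is
\[
\abs{\cL_{\xi}^{n} h}_{\Aspace} \leq C \lambda^{n} \norm{h}_{\Aspace} + K_{n} \LL{1}{h}{\Oo}, \qquad h \in \Aspace,\ n \geq 1,
\]
for constants $C, K_{n} < \infty$, from which Hennion's theorem yields both the boundedness of $\cL_{\xi}$ on $\Aspace$ and the bound $\lambda$ on its essential spectral radius.

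For the one-step inequality ($n = 1$) I decompose $\cL_{\xi} h = \sum_{i\in \cI} \cL_{i} h$ with $\cL_{i} h(x) = ((\xi/\map') h)\circ \map_{i}^{-1}(x) \indicator{\map\omega_{i}}(x)$ and control $\osc{\cL_{i} h}{\ball{\epsilon}{x}\cap \Oo}$ branch by branch using the product identity
\[
\cL_{i} h(x_{1}) - \cL_{i} h(x_{2}) = \tfrac{\xi}{\map'}(u_{1})\bigl(h(u_{1}) - h(u_{2})\bigr) + h(u_{2})\bigl(\tfrac{\xi}{\map'}(u_{1}) - \tfrac{\xi}{\map'}(u_{2})\bigr)
\]
with $u_{j} := \map_{i}^{-1}(x_{j})$, combined with the H\"older hypothesis \eqref{eq:holder} which bounds the second bracket by $H_{\xi}(2 r_{i}\epsilon)^{\alpha}$, where $r_{i} := \LL{\infty}{\tfrac{1}{\map'}}{\omega_{i}}$. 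The crucial mechanism is that upon integrating the dominant interior term
\[
\int_{\map \omega_{i}} \Bigl|\tfrac{\xi}{\map'}(\map_{i}^{-1}(x))\Bigr|\, \osc{h}{\map_{i}^{-1}\ball{\epsilon}{x}}\, dx
\]
and performing the change of variables $x = \map(u)$, the Jacobian $\map'(u)\,du$ cancels the $1/\map'$ in the weight to produce $\xi(u)\,du$---so the factor $\map'$, which cannot be controlled in $\Lspace{\infty}{}$-norm, is absorbed. Because $\{\omega_{i}\}_{i\in \cI}$ partition $\Oo$ up to measure zero, the resulting sum $\sum_{i} \int_{\omega_{i}} \abs{\xi(u)}\, \osc{h}{\ball{r_{i}\epsilon}{u}\cap \omega_{i}}\, du$ collapses into a single integral over $\Oo$ with branch-wise-constant radius $r_{i(u)}\epsilon$; estimating this using the pointwise bound $\LL{\infty}{\xi}{\omega_{i}} r_{i}^{\alpha} \leq \lambda$ together with \eqref{eq:defnorm} yields a contribution of at most $\lambda\epsilon^{\alpha}\abs{h}_{\Aspace}$. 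Boundary crossings (where $\ball{\epsilon}{x}$ meets $\partial(\map\omega_{i})$) contribute per branch an error of order $\epsilon \LL{\infty}{\xi/\map'}{\omega_{i}}\supnorm{h}$, which sums over $i$ by \eqref{eq:summing} to a total of order $\epsilon^{1-\alpha}\supnorm{h}$---a lower-order correction.

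To obtain the iterated inequality I apply the same argument to $\cL_{\xi}^{n}$, viewed as the weighted transfer operator for $\map^{n}$ with weight $\xi_{n} := \prod_{k=0}^{n-1}\xi\circ \map^{k}$. A sub-multiplicative estimate over $n$-cylinders shows that the analogue of $\lambda$ for the $n$-th iterate is bounded above by $\lambda^{n}$, and the hypotheses \eqref{eq:expanding}, \eqref{eq:summing}, \eqref{eq:holder} transfer to the iterate by virtue of $\LL{\infty}{\tfrac{1}{\map'}}{\Oo} < 1$. The residual $\supnorm{h}$ in the error is converted into $\LL{1}{h}{\Oo}$ via the continuous embedding $\Aspace \hookrightarrow \Lspace{\infty}{\Oo}$ and an interpolation step, at the cost of enlarging the constants $K_{n}$. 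The principal technical obstacle, distinguishing this setting from Keller's original framework \cite{keller1985generalized}, is keeping all constants finite and the leading coefficient in front of $\abs{h}_{\Aspace}$ equal to $\lambda$ (rather than a sum over branches) in the presence of countably many branches, an unbounded weight $\xi$, and an unbounded derivative $\map'$ near the discontinuities; the essential ingredients for overcoming this are the cancellation of $\map'$ by $1/\map'$ highlighted above, the fact that the $\omega_{i}$ form a partition of $\Oo$, and the summability \eqref{eq:summing} which controls the branch-wise sums arising from the boundary contributions.
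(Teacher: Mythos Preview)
Your overall strategy---prove a Lasota--Yorke inequality and invoke Hennion's theorem via the compact embedding \eqref{eq:compact}---is exactly the paper's, and your handling of the dominant interior term (the change of variables cancelling $\map'$ against $1/\map'$) is precisely the paper's estimate for its term $A_{1}$.

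The gap lies in the boundary contribution. You assert that it is of order $\epsilon^{1-\alpha}\supnorm{h}$ after summing via \eqref{eq:summing}, and that it becomes a lower-order correction absorbed into $K_{n}\LL{1}{h}{\Oo}$ by interpolation. But the only available conversion is $\supnorm{h}\le \epsilon_{0}^{-(1-\alpha)}\abs{h}_{\Aspace}+\abs{\Oo}^{-1}\LL{1}{h}{\Oo}$, and substituting this into $4\epsilon_{0}^{1-\alpha}\supnorm{h}\sum_{i}r_{i}\LL{\infty}{\xi}{\omega_{i}}$ yields a contribution of $4\sum_{i}r_{i}\LL{\infty}{\xi}{\omega_{i}}$ to the coefficient of $\abs{h}_{\Aspace}$: the powers of $\epsilon_{0}$ cancel, so this coefficient cannot be made small and is in general strictly larger than $\lambda$ (take $\xi\equiv 1$, $r_{i}=2^{-i}$: the sum is $1$ while $\lambda=2^{-\alpha}$). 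Passing to $\map^{n}$ does not help, since the analogous sum over $n$-cylinders grows like $\bigl(\sum_{i}r_{i}\LL{\infty}{\xi}{\omega_{i}}\bigr)^{n}$, so your argument only yields essential spectral radius $\le \sum_{i}r_{i}\LL{\infty}{\xi}{\omega_{i}}$ rather than $\le\lambda$. The paper resolves this by splitting $\cI=\cI_{1}\cup\cI_{2}$ with $\cI_{1}$ finite and the tail satisfying $\sum_{i\in\cI_{2}}r_{i}\LL{\infty}{\xi}{\omega_{i}}\le\lambda\delta/16$ (so that part of the boundary really is $\le\tfrac{\delta}{4}\lambda\abs{h}_{\Aspace}$), and then handling the $\cI_{1}$-boundaries via \cite[Theorem~2.1]{keller1985generalized}, which produces an $\LL{1}{h}{\Oo}$ term directly rather than a $\supnorm{h}$ term; a further subdivision into pieces with $\abs{\map\omega_{i}}\le 2\epsilon_{0}\Gamma$ is also needed to control your ``second bracket'' (the H\"older term, the paper's $A_{2}$). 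These refinements are the substance of the proof of Theorem~\ref{thm:LY} and are missing from your sketch.
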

\noindent
By a standard argument (see for example \cite[p.1281]{Li1})  the essential spectral radius estimate of the above theorem follows from the  compact embedding \eqref{eq:compact} and the  Lasota-Yorke type estimate contained in the following theorem. 
%Since $\LL{\infty}{\cL_{\xi}}{\Oo}<\infty$, once the essential spectral radius estimate has been shown, this implies a  bound on the spectral radius. 
In the case where $\LL{\infty}{\xi}{\Oo}<\infty$ an elementary estimate shows that $\LL{1}{\cL_{\xi}}{\Oo} \leq \LL{\infty}{\xi}{\Oo}$ and so, once the essential spectral radius estimate has been shown, this implies that  the spectral radius is not greater than $\LL{\infty}{\xi}{\Oo}$.
\begin{thm}\label{thm:LY}
Suppose that $\map$ and $\xi$ are as per the assumptions of Theorem~\ref{thm:qc}. Then for all $\delta>0$ there exists $C_{ \delta}<\infty$ such that
\[
\norm{\cL_{\xi}h}_{\Aspace}\leq (2+\delta)\lambda \norm{h}_{\Aspace} + C_{ \delta}\LL{1}{h}{\Oo} 
\quad \quad
\text{for all $h\in \Aspace$}.
\]
\end{thm}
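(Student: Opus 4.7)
The plan is to establish the inequality in the usual Lasota--Yorke form, bounding the seminorm $\abs{\cL_{\xi} h}_{\Aspace}$ and the $\Lspace{1}{\Oo}$ norm separately. Decomposing branch by branch, write $\cL_{\xi} h = \sum_{i \in \cI} \phi_{i}$ with $\phi_{i} := (\xi h / \map') \circ \map_{i}^{-1} \cdot \indicator{\map\omega_{i}}$ extended by zero. Subadditivity of oscillation gives, pointwise in $x$,
\[
\osc{\cL_{\xi} h}{\ball{\epsilon}{x}\cap\Oo} \leq \sum_{i \in \cI}\left[\osc{\phi_{i}}{\ball{\epsilon}{x} \cap \map\omega_{i}} + \supnorm{\phi_{i}|_{\ball{\epsilon}{x} \cap \map\omega_{i}}} \cdot \indicator{\ball{\epsilon}{x}\not\subset\map\omega_{i}}\right],
\]
thereby separating a smooth interior contribution from a boundary-jump contribution supported, for each $i$, on a set of measure at most $4\epsilon$.

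For the interior oscillation I set $\psi_{i} := (\xi/\map')\,h$ on $\omega_{i}$ so that $\phi_{i} = \psi_{i} \circ \map_{i}^{-1}$, and write $[u,v] := \map_{i}^{-1}(\ball{\epsilon}{x}\cap\map\omega_{i})$. Applying the standard product rule for oscillation to $\psi_{i}$ on $[u,v]$, anchored at the reference point $y_{0} := \map_{i}^{-1}(x)$, and invoking \eqref{eq:holder} in the form $\osc{\xi/\map'}{[u,v]} \leq H_{\xi}(v-u)^{\alpha}$, yields
\[
\osc{\phi_{i}}{\ball{\epsilon}{x}\cap\map\omega_{i}} \leq \tfrac{\abs{\xi(y_{0})}}{\abs{\map'(y_{0})}}\,\osc{h}{[u,v]} + H_{\xi}(v-u)^{\alpha}\bigl(\osc{h}{[u,v]} + \supnorm{h|_{[u,v]}}\bigr).
\]
The decisive step is now the change of variables $y = \map_{i}^{-1}(x)$ (so that $dx = \abs{\map'(y)}\,dy$): in the first term the Jacobian $\abs{\map'(y_{0})}$ collapses $\abs{\xi(y_{0})}/\abs{\map'(y_{0})}$ down to $\abs{\xi(y_{0})}$, producing the coefficient $\LL{\infty}{\xi}{\omega_{i}}$ in front of $\int_{\omega_{i}}\osc{h}{[u,v]}\,dy$. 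Combined with the inclusion $[u,v]\subset\ball{\epsilon_{i}}{y_{0}}$ for $\epsilon_{i} := \epsilon\LL{\infty}{1/\map'}{\omega_{i}}$, the defining estimate of $\abs{h}_{\Aspace}$, and the identity $\epsilon_{i}^{\alpha} = \epsilon^{\alpha}\LL{\infty}{1/\map'}{\omega_{i}}^{\alpha}$, summation over $i$ yields a first contribution of exactly $\lambda\,\epsilon^{\alpha}\,\abs{h}_{\Aspace}$.

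The boundary-jump contribution and the $\supnorm{h|_{[u,v]}}$ remainder in the product rule are both handled via the sup-to-$\Lspace{1}{}$ inequality $\supnorm{h|_{J}} \leq m(J)^{-1}\int_{J}\abs{h}\,dm + \osc{h}{J}$. The averaged piece, after summation using \eqref{eq:summing}, produces the remainder $C_{\delta}\LL{1}{h}{\Oo}$; the additional oscillation piece produces a second copy of $\lambda\,\abs{h}_{\Aspace}$, which together with the first accounts for the ``$2$'' in $(2+\delta)\lambda$. The residual H\"older correction $H_{\xi}(v-u)^{\alpha}\osc{h}{[u,v]}$ carries an extra $\epsilon^{\alpha}$ factor, so against the $\epsilon^{-\alpha}$ in \eqref{eq:defnorm} it contributes $O(\epsilon_{0}^{\alpha})$, absorbable into $\delta\,\lambda\,\abs{h}_{\Aspace}$ by fixing $\epsilon_{0}$ small. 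Finally $\LL{1}{\cL_{\xi}h}{\Oo}$ is controlled crudely via the embedding $\Aspace\hookrightarrow\Lspace{\infty}{\Oo}$ together with \eqref{eq:summing}. The chief technical obstacle is that a naive estimate of the H\"older correction, after change of variables, picks up a factor $\map'(y_{0})$ which in the intended applications (notably the Lorenz semiflows of Section~\ref{sec:lorenz}) is unbounded on $\omega_{i}$; the saving graces are the extra $\epsilon^{\alpha}$, which makes the offending terms genuinely lower order in $\epsilon$, and the summability \eqref{eq:summing}, which tames the sum over the countably many branches without requiring any separate bounded-distortion assumption.
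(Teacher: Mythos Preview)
Your outline follows the right skeleton (branch-by-branch decomposition, product rule for oscillation, change of variables), but two of the key steps do not go through as written.

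\textbf{The boundary term.} Your crude boundary estimate yields, after summing and multiplying by $\epsilon^{-\alpha}$, a term of order $\epsilon^{1-\alpha}\,\supnorm{h}\sum_{i}\LL{\infty}{\xi/\map'}{\omega_{i}}$. You then invoke the embedding $\Aspace\hookrightarrow\Lspace{\infty}{\Oo}$, but the embedding constant is itself of order $\epsilon_{0}^{-(1-\alpha)}$ (see \eqref{eq:buda}), so the two powers of $\epsilon_{0}$ cancel and the contribution is $O(1)\cdot\norm{h}_{\Aspace}$, not small and not $\lambda\abs{h}_{\Aspace}$. Your claim that the ``oscillation piece produces a second copy of $\lambda\abs{h}_{\Aspace}$'' is therefore unjustified; nothing in the sketch produces the constant $\lambda$ here. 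The paper handles this by splitting $\cI=\cI_{1}\cup\cI_{2}$: on $\cI_{2}$ the tail of the sum \eqref{eq:summing} is made small enough to absorb into $\tfrac{\delta}{4}\lambda$, while on $\cI_{1}$ the images $\map\omega_{i}$ are arranged to be long enough that Keller's refined indicator estimate \cite[Theorem~2.1]{keller1985generalized} applies and delivers the factor $(2+\tfrac{\delta}{4})$ in front of the interior oscillation. That is where the ``$2$'' actually comes from.

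\textbf{The H\"older correction.} You correctly flag that the change of variables produces an unbounded factor $\abs{\map'(y_{0})}$, but your proposed cure is wrong: the factor $(v-u)^{\alpha}\sim(\sigma_{i}\epsilon)^{\alpha}$ is exactly cancelled by the $\epsilon^{-\alpha}$ in the seminorm, so there is no ``extra $\epsilon^{\alpha}$'' left over. If one additionally uses $\int_{\omega_{i}}\osc{h}{\ball{\sigma_{i}\epsilon}{y}}\,dy\leq(\sigma_{i}\epsilon)^{\alpha}\abs{h}_{\Aspace}$ one does gain $\epsilon^{\alpha}$, but then one is left with $\sum_{i}\sigma_{i}^{2\alpha}\LL{\infty}{\map'}{\omega_{i}}$, which is not controlled by any hypothesis and in fact diverges for the Lorenz return map when $\alpha<\tfrac{1}{2}$. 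The paper's remedy is entirely different: it first subdivides every branch so that $\abs{\map\omega_{i}}\leq 2\epsilon_{0}\Gamma$ for all $i$ (see \eqref{eq:smallpieces}), then bounds the H\"older correction by a constant times $\abs{\map\omega_{i}}$ and combines this with the \emph{local} estimate $\LL{\infty}{h}{\omega_{i}}\leq\epsilon_{0}^{-1}\int_{\omega_{i}}\osc{h}{\ball{\epsilon_{0}}{x}}\,dx+\abs{\omega_{i}}^{-1}\LL{1}{h}{\omega_{i}}$ from \cite[Lemma~2.2]{keller1985generalized}. After summing over $i$ this recovers $\abs{h}_{\Aspace}$ and $\LL{1}{h}{\Oo}$, and the smallness now comes from the choice \eqref{eq:okey} of $\epsilon_{0}$. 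Neither the subdivision of branches nor the $\cI_{1}/\cI_{2}$ split appears in your sketch, and without them the argument does not close.
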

\noindent
The remainder of this section is devoted to the proof of the above proposition.
This estimate is an extension of the result of Keller~\cite{keller1985generalized} to our setting. % of weighted transfer operators and where the partition need not be finite.
The proof follows a similar argument to Keller's original with various additional  complications, in particular because of the weighting $\xi$ and the possibility that $\cI$ is merely countable. As such we are forced to redo the proof but when possible we refer to the relevant theorems and lemmas which we can reuse.

\subsection{Proof of Theorem~\ref{thm:LY}}
We may assume that $\delta \leq 1$.
First $\epsilon_0>0$ must be carefully chosen and it is convenient to divide the index set as $\cI = \cI_1 \cup \cI_2$.
By \eqref{eq:summing} we may choose a finite set $\cI_1\subset\cI$ such that
\begin{equation}
 \label{eq:remainder}
  \sum_{i\in\cI_{2}} \LL{\infty}{1/\map'}{\omega_i} \LL{\infty}{\xi}{\omega_{i}} \leq \frac{\lambda \delta}{16}, % \LL{\infty}{\xi}{\Oo}.
\end{equation}
where $\cI_2:= \cI\setminus\cI_1$.
Let $\Gamma := 32 \delta^{-1}+2$.
Choosing $\epsilon_0$ sufficiently small we ensure that
\begin{equation}
 \label{eq:bigpieces}
  \abs{\map\omega_i}\geq \epsilon_0\Gamma \quad\quad\quad \text{for all $i\in \cI_1$}
\end{equation}
and that
\begin{equation}
 \label{eq:okey}
 \epsilon_0^\alpha \leq \frac{\delta  \lambda}{ 8  (8+ {\delta}) H_\xi \Gamma }.
\end{equation}
(The reason for this particular choice will subsequently become clear  \eqref{eq:est2}.)
If $ \abs{\map\omega_i}> 2 \epsilon_0\Gamma $ for some $i\in \cI_1$ we chop $\omega_i$ into pieces such that $\epsilon_0\Gamma  \leq \abs{\map\omega_j}\leq 2 \epsilon_0\Gamma $ for all the resulting pieces. 
If $ \abs{\map\omega_i}> 2 \epsilon_0\Gamma $ for some $i\in \cI_2$ we chop $\omega_i$ into pieces as before but in this case we move the resulting pieces into the set $\cI_1$. This means that the estimate \eqref{eq:remainder} remains unaltered. Note that $\cI_1$ may no longer be a finite set.
To conclude we have arranged so that \eqref{eq:remainder}, \eqref{eq:bigpieces}, and \eqref{eq:okey} hold and furthermore that
\begin{equation}
 \label{eq:smallpieces}
  \abs{\map\omega_i}\leq 2\epsilon_0\Gamma \quad\quad\quad \text{for all $i\in \cI$}.
\end{equation}

Fix $h\in \Aspace$. 
We start by noting that by the definition \eqref{eq:defnorm} of the seminorm and the definition \eqref{eq:defL} of the transfer operator
\begin{equation}\begin{split}\label{eq:wien}
\abs{\cL_{\xi}h}_{\Aspace} 
&= 
\sup_{\epsilon \in (0,\epsilon_{0})} \epsilon^{-\alpha}\int_{\Oo}\osc{\cL_{\xi}h}{ \ball{\epsilon}{x} \cap \Oo} \ dx\\
&\leq
 \sup_{\epsilon \in (0,\epsilon_{0})}  \sum_{i}   \epsilon^{-\alpha}\int_{\Oo} \osc{ \left(\tfrac{\xi \cdot h}{\map'}\right)\circ \map_{i}^{-1}\cdot \indicator{\map\omega_{i}} }{ \ball{\epsilon}{x} \cap \Oo} \ dx.
\end{split}\end{equation}
To proceed we take advantage of several estimates which have already been proved elsewhere.
Firstly by {\cite[Theorem 2.1]{keller1985generalized}}  for each $i\in \cI_1$, since $\abs{\map\omega_i}\geq (32\delta^{-1} +2)\epsilon_0$ by \eqref{eq:bigpieces} and  $\abs{\map\omega_i}\geq 4 \epsilon_0$, we have that
\begin{equation}\begin{split}
\label{eq:kel1}
&\int_{\Oo}\osc{ \left(\tfrac{\xi \cdot h}{\map'}\right)\circ \map_{i}^{-1}\cdot \indicator{\map\omega_{i}} }{ \ball{\epsilon}{x} \cap \Oo} \ dx\\
& \quad  \quad  \quad \leq (2+\tfrac{\delta}{4}) \int_{\map \omega_i} \osc{ \left(\tfrac{\xi \cdot h}{\map'}\right)\circ \map_{i}^{-1}}{ \ball{\epsilon}{x} \cap \map\omega_{i}} \ dx\\
& \quad  \quad  \quad + \frac{\epsilon}{\epsilon_{0}} \int_{\map\omega_i}  \abs{\tfrac{\xi \cdot h}{\map'}}\circ \map_{i}^{-1}(x) \ dx.
\end{split}\end{equation}
For  $i\in \cI_2$ (where  $\abs{\map\omega_i}$ may be small) we use the following, more basic estimate.
%To proceed we require the following lemma proved in 
By \cite[Proposition 3.2 (ii)]{MR1759406}
for each $i$
\begin{equation*}\begin{split}
%\label{eq:sau1}
\osc{ \left(\tfrac{\xi \cdot h}{\map'}\right)\circ \map_{i}^{-1}\cdot \indicator{\map\omega_{i}} }{ \ball{\epsilon}{x} \cap \Oo} 
& \leq  \osc{ \left(\tfrac{\xi \cdot h}{\map'}\right)\circ \map_{i}^{-1}}{ \ball{\epsilon}{x} \cap \map\omega_{i}} \cdot \indicator{\map\omega_i}\\
& + 2  \LL{\infty}{\tfrac{\xi \cdot h}{\map'} }{\omega_i}   \indicator{F_{i,\epsilon}}(x) 
\end{split}\end{equation*}
where $F_{i,\epsilon}$ denotes the set of all points $x\in \Oo$ which are within a distance of $\epsilon$ of the end points of the interval $\map \omega_i$. Since 
$\abs{\smash{\int_{\map\omega_i} \indicator{F_{i,\epsilon}}(x) \ dx}} \leq 2 \epsilon$ the above implies  that
\begin{equation}\begin{split}
\label{eq:sau2}
&\int_{\Oo}\osc{ \left(\tfrac{\xi \cdot h}{\map'}\right)\circ \map_{i}^{-1}\cdot \indicator{\map\omega_{i}} }{ \ball{\epsilon}{x} \cap \Oo} \ dx\\
& \quad  \quad  \quad \leq  \int_{\map \omega_i} \osc{ \left(\tfrac{\xi \cdot h}{\map'}\right)\circ \map_{i}^{-1}}{ \ball{\epsilon}{x} \cap \map\omega_{i}} \ dx\\
& \quad  \quad  \quad + 4 \epsilon \LL{\infty}{\xi }{\omega_{i}} \LL{\infty}{h }{\Oo} \LL{\infty}{1/\map' }{\omega_i}.
\end{split}\end{equation}
Note that the integral term in the middle line of the above equation is identical to the integral term of the middle line of \eqref{eq:kel1}.
We also require the following basic estimate for the $\osc{\cdot}{\cdot}$ of a product.
\begin{lem}\label{lem:product}
Suppose $\cS\subset\Oo$ is an interval, $g_1:\cS \to \bC$,  $g_1:\cS \to \bC$ and $y\in \cS$. Then
\[
\osc{g_{1} \cdot g_{2}}{\cS} \leq \abs{g_{1}(y)} \cdot \osc{g_{2}}{\cS} + 2 \LL{\infty}{g_{2}}{\cS}  \cdot \osc{g_{1}}{\cS}.
\]
\end{lem}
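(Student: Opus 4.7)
This is a standard product-rule estimate for oscillations and the plan is to telescope through the auxiliary point $y$. For arbitrary $x_1,x_2 \in \cS$ I would write the algebraic identity
\[
g_1(x_1)g_2(x_1) - g_1(x_2)g_2(x_2) = \bigl(g_1(x_1)-g_1(y)\bigr)g_2(x_1) + g_1(y)\bigl(g_2(x_1)-g_2(x_2)\bigr) + \bigl(g_1(y)-g_1(x_2)\bigr)g_2(x_2),
\]
apply the triangle inequality, and bound the three resulting terms separately. The first and third are each controlled by $\osc{g_1}{\cS}\cdot \LL{\infty}{g_2}{\cS}$, since $|g_i(x_j)-g_1(y)|$ is an oscillation value of $g_1$ on $\cS$ and $|g_2(x_j)|\leq \LL{\infty}{g_2}{\cS}$. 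The middle term is bounded directly by $\abs{g_1(y)} \cdot \osc{g_2}{\cS}$. Taking the essential supremum over $(x_1,x_2) \in \cS^2$ then produces exactly the claimed inequality.

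The only subtle point is that $\osc{\cdot}{\cS}$ is defined as an essential supremum on $\cS^2$ while $g_1(y)$ is a genuine pointwise value. This is harmless: by Fubini applied to the defining set, for a.e.\ $y\in \cS$ one has $|g_1(x)-g_1(y)| \leq \osc{g_1}{\cS}$ for a.e.\ $x\in \cS$, and this is the sense in which the first and third terms are controlled. There is no real obstacle here; the lemma is purely a bookkeeping step, and in the subsequent applications the point $y$ will simply be chosen to make $\abs{g_1(y)}$ conveniently small.
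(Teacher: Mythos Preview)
Your proof is correct and is essentially identical to the paper's own argument: the paper simply writes down the same three-term telescoping identity through the point $y$ and says ``it suffices to observe'' this, leaving the triangle-inequality bounds implicit. Your additional remark about the essential-supremum versus pointwise issue is a careful clarification the paper omits, but the underlying idea and decomposition are the same.
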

\begin{proof}
Suppose $x_{1},x_{2}, y\in \cS$. It suffices to observe that
\[
\begin{split}
(g_{1} \cdot g_{2})(x_{1}) - (g_{1} \cdot g_{2})(x_{2}) 
&=  g_{1}(y)\left(g_{2}(x_{1})-g_{2}(x_{2})\right)\\
& + g_{2}(x_{1})   \left(g_{1}(x_{1})-g_{1}(y) \right)
 + g_{2}(x_{2})   \left(g_{1}(y)-g_{1}(x_{2}) \right).\qedhere
\end{split}
\]
%The lemma then follows from the definition of $\osc{\cdot}{\cdot}$.
\end{proof}
This means in particular that (this is the term which appears in the middle lines of \eqref{eq:kel1} and \eqref{eq:sau2})
\begin{equation}
 \label{eq:product}\begin{split}
 &\int_{\map \omega_i} \osc{ \left(\tfrac{\xi \cdot h}{\map'}\right)\circ \map_{i}^{-1}}{ \ball{\epsilon}{x} \cap \map\omega_{i}} \ dx\\
&\quad \quad \quad \quad \leq 
   \int_{\map \omega_i}  \abs{\tfrac{\xi}{\map'}}\circ\map_{i}^{-1}(x) \cdot \osc{h \circ \map_{i}^{-1}}{ \ball{\epsilon}{x} \cap \map\omega_{i}} \ dx\\
&\quad \quad \quad \quad + 2
\LL{\infty}{h}{\omega_i} \int_{\map \omega_i} \osc{ \tfrac{\xi }{\map'}\circ \map_{i}^{-1}}{ \ball{\epsilon}{x} \cap \map\omega_{i}} \ dx.
    \end{split}
\end{equation}
Recalling \eqref{eq:wien} and applying the estimates of \eqref{eq:kel1}, \eqref{eq:sau2} and \eqref{eq:product} we have
\begin{equation}\label{eq:infour}
 \abs{\cL_{\xi}h}_{\Aspace} 
\leq 
\sup_{\epsilon \in (0,\epsilon_{0})}
\left(A_{1,\xi,h}(\epsilon) +A_{2,\xi,h}(\epsilon) +A_{3,\xi,h}(\epsilon) +A_{4,\xi,h}(\epsilon)\right),
\end{equation}
where we have definded for convenience
\[\begin{split}
 A_{1,\xi,h}(\epsilon) 
 &:= \epsilon^{-\alpha} (2+\tfrac{\delta}{4}) \sum_{i\in\cI} \int_{\map \omega_i}\abs{\tfrac{\xi }{\map'}}\circ \map_{i}^{-1}(x)
 \cdot \osc{ h\circ \map_{i}^{-1}}{ \ball{\epsilon}{x} \cap \map\omega_{i}} \ dx   \\
 A_{2,\xi,h}(\epsilon) 
 &:=  2 \epsilon^{-\alpha} (2+\tfrac{\delta}{4}) \sum_{i\in\cI} \LL{\infty}{h}{\omega_i} \int_{\map \omega_i} \osc{ \tfrac{\xi }{\map'}\circ \map_{i}^{-1}}{ \ball{\epsilon}{x} \cap \map\omega_{i}} \ dx\\ 
 A_{3,\xi,h}(\epsilon) &:= 
 4 \epsilon^{1-\alpha} \LL{\infty}{h }{\Oo} \sum_{i\in\cI_2}  \LL{\infty}{1/\map' }{\omega_i} \LL{\infty}{\xi }{\omega_{i}}\\
 A_{4,\xi,h}(\epsilon) 
  &:=\frac{ \epsilon^{1-\alpha}}{\epsilon_0}\sum_{i\in\cI_1}
 \int_{\map \omega_i}\abs{\tfrac{\xi \cdot h}{\map'}}\circ \map_{i}^{-1}(x)\ dx.
\end{split}\]
The remainder of the proof involves independently estimating each of these four terms.

We start by estimating $ A_{1,\xi,h}(\epsilon)$.
Let $\sigma_{i} := \LL{\infty}{1/ \map'}{\omega_{i}}\in (0,1)$ by assumption \eqref{eq:expanding}.
Since   $\map_{i}^{-1} \ball{ \epsilon}{x} \subseteq \ball{\sigma_{i}\epsilon}{\map_{i}^{-1}x}$ we have that 
\begin{equation}\begin{split}\label{eq:back}
\osc{  h\circ \map_{i}^{-1}}{ \ball{\epsilon}{x} \cap \map\omega_{i}} 
&= \osc{ h}{\map_{i}^{-1} \ball{\epsilon}{x} \cap \omega_{i}}\\
&\leq  \osc{   h}{ \ball{\sigma_{i} \epsilon}{y_{i}} \cap \omega_{i}}
\end{split}\end{equation}
where $y_{i} := \map_{i}^{-1}x$.
We change variables in the integral and so
\begin{equation}\begin{split}\label{eq:est1}
 A_{1,\xi,h}(\epsilon) 
&\leq \epsilon^{-\alpha} (2+\tfrac{\delta}{4}) \sum_{i\in\cI} \int_{ \omega_i}\abs{{\xi }}(y_i)
 \cdot \osc{ h}{ \ball{\sigma_{i}\epsilon}{y_i} \cap \omega_{i}} \ dy_i   \\
 &\leq \epsilon^{-\alpha} (2+\tfrac{\delta}{4}) \LL{\infty}{\xi}{\omega_{i}} \int_{ \Oo}
  \osc{ h}{ \ball{\sigma_{i}\epsilon}{y} \cap \Oo} \ dy  \\
  &\leq \sigma_{i}^{\alpha} (2+\tfrac{\delta}{4}) \LL{\infty}{\xi}{\omega_{i}} \abs{h}_{\Aspace}
  \leq  (2+\tfrac{\delta}{4}) \lambda \abs{h}_{\Aspace}.
\end{split}\end{equation}
Now we estimate $ A_{2,\xi,h}(\epsilon)$.
By {\cite[Lemma 2.2]{keller1985generalized}} we have the estimate
\[
 \LL{\infty}{h}{\omega_i} 
 \leq \epsilon_{0}^{-1}\int_{\omega_i} \osc{h}{\ball{\epsilon_0}{x}} \ dx
 + \abs{\omega_i}^{-1} \LL{1}{h}{\omega_i}.
\]
By assumption \eqref{eq:holder} we know that $\osc{\smash{ \tfrac{\xi }{\map'}}}{ \ball{\sigma_{i}\epsilon}{y_i} \cap \omega_{i}} \leq 2 H_\xi \sigma_{i}^\alpha \epsilon^\alpha$ and so, changing variables as per \eqref{eq:back}, we have
 \[
  \int_{\map \omega_i} \osc{ \tfrac{\xi }{\map'}\circ \map_{i}^{-1}}{ \ball{\epsilon}{x} \cap \map\omega_{i}} \ dx   \leq 
  \int_{ \omega_i} \osc{ \tfrac{\xi }{\map'}}{ \ball{\sigma_{i}\epsilon}{y_i} \cap \omega_{i}} \ dy_i
\leq 2 \abs{\omega_i} H_\xi \sigma_{i}^\alpha \epsilon^\alpha.
 \]
 Combining the above two estimates means that
 \begin{equation*}
       A_{2,\xi,h}(\epsilon)
   \leq 
   4(2+\tfrac{\delta}{4}) H_\xi   \sum_{i\in \cI} \sigma_{i}^\alpha \left(    
  \epsilon_0^{-(1-\alpha)} \abs{\omega_i} \epsilon_0^{-\alpha} \int_{\omega_i}  \osc{h}{\ball{\epsilon_0}{x}} \ dx
   +    \LL{1}{h}{\omega_i}.
   \right)
 \end{equation*}
 By the expanding assumption \eqref{eq:expanding} and by \eqref{eq:smallpieces} we know that $\abs{\omega_i} \leq \sigma_{i} \abs{\map\omega_i} \leq 2  \sigma_{i} \epsilon_0 \Gamma$. Using also \eqref{eq:okey} this means that  for all $i\in \cI$
 \[
  \begin{split}
   4(2+\tfrac{\delta}{4}) H_\xi \sigma^\alpha     
  \epsilon_0^{-(1-\alpha)} \abs{\omega_i} 
  &\leq   8 (2+\tfrac{\delta}{4}) H_\xi \LL{\infty}{1/\map'}{\Oo}^{1+\alpha}     
  \epsilon_0^{\alpha} \Gamma 
  \\
  &\leq \tfrac{\delta}{4}\lambda.
  \end{split}
 \]
 Consequently we have shown that
  \begin{equation}\label{eq:est2}
 \begin{split}
       A_{2,\xi,h}(\epsilon)
   &\leq 
   \tfrac{\delta}{4} \lambda \abs{h}_{\Aspace} + 
    4(2+\tfrac{\delta}{4}) H_\xi \LL{\infty}{1/\map'}{\Oo}^\alpha  \LL{1}{h}{\Oo}.
    \end{split}
 \end{equation}
Now we estimate $ A_{3,\xi,h}(\epsilon)$.
Using again {\cite[Lemma 2.2]{keller1985generalized}} we have the estimate
\begin{equation}\label{eq:buda}
 \LL{\infty}{h}{\Oo} \leq
 \epsilon_{0}^{-(1-\alpha)} \abs{h}_{\Aspace} + \abs{\Oo}^{-1} \LL{1}{h}{\Oo}.
\end{equation}
This means that 
\[
  A_{3,\xi,h}(\epsilon)
 \leq
 4 \left(  \abs{h}_{\Aspace} +  \frac{ \epsilon_{0}^{1-\alpha}}{ \abs{\Oo}} \LL{1}{h}{\Oo}\right)
  \sum_{i\in\cI_2}  \LL{\infty}{1/\map' }{\omega_i} \LL{\infty}{\xi }{\omega_{i}} .
\]
By \eqref{eq:remainder} we know that
$  \sum_{i\in\cI_2} \LL{\infty}{1/\map'}{\omega_i}  \LL{\infty}{\xi }{\omega_{i}} \leq \frac{\lambda \delta}{16}$ and so 
\begin{equation}\label{eq:est4}
  A_{3,\xi,h}(\epsilon)
 \leq \tfrac{ \delta}{4}\lambda\abs{h}_{\Aspace} 
 + \left( \frac{ \epsilon_{0}^{1-\alpha} \delta}{4 \abs{\Oo}  } \lambda  \right)
 \LL{1}{h}{\Oo}.
\end{equation}
Now we estimate $ A_{4,\xi,h}(\epsilon)$.
Using again the assumption~\eqref{eq:summing} we may choose a finite set $\cI_{3}\subset\cI_{1}$ such that
\[
\sum_{i\in \cI_{4}}   \LL{\infty}{1/\map'}{\omega_i}  \LL{\infty}{\xi }{\omega_{i}}
\leq \frac{\delta \epsilon_{0}}{4} \lambda,
\]
where $\cI_{4}:= \cI_{1}\setminus\cI_{3}$.
We therefore estimate, using also a change of variables $y_{i} := \map_{i}^{-1}x$,
\[
\begin{split}
A_{4,\xi,h}(\epsilon) 
&\leq \epsilon_0^{-\alpha} \sum_{i\in \cI_{1}} \int_{\map\omega_{i}} \abs{ \tfrac{\xi\cdot h}{\map'}  } \circ\map_{i}^{-1}(x) \ dx\\
&\leq  \epsilon_0^{-\alpha} \sum_{i\in \cI_{3}} \int_{\omega_{i}} \abs{ {\xi\cdot h}}(y_{i})  \ dy_{i}
+  \epsilon_0^{-\alpha} \sum_{i\in \cI_{4}} \LL{\infty}{\frac{\xi}{\map'}}{\omega_{i}} \LL{\infty}{h}{\Oo}
\end{split}
\]
Using \eqref{eq:buda} to estimate $\LL{\infty}{h}{\Oo}$,
this means that for all  $\epsilon\in (0,\epsilon_0)$ we have 
\begin{equation}\label{eq:est3}
 A_{4,\xi,h}(\epsilon)
 \leq \frac{\delta}{4} \lambda \abs{h}_{\Aspace} + \left(\epsilon_0^{-\alpha}\abs{\Oo}^{-1}\sup_{i\in\cI_{3}}\LL{\infty}{\xi}{\omega_{i}}  \right)\LL{1}{h}{\Oo}.
\end{equation}

Summing the estimates of \eqref{eq:est1}, \eqref{eq:est2}, \eqref{eq:est4}, and \eqref{eq:est3} we have shown that 
\[
 \abs{\cL_{\xi}h}_{\Aspace} 
\leq (2+\delta) \lambda \abs{h}_{\Aspace} 
+ C_{\delta} \LL{1}{h}{\Oo}
\]
for all $h\in \Aspace$,
where
\[
 C_{\delta}:= 
 4(2+\tfrac{\delta}{4}) H_\xi \LL{\infty}{\tfrac{1}{\map'}}{\omega_{i}}^\alpha
 +
\frac{1}{ \epsilon_0^{\alpha}\abs{\Oo}}\sup_{i\in \cI_{3}}\LL{\infty}{\xi}{\omega_{i}} 
 +
  \frac{\lambda \epsilon_{0}^{1-\alpha} \delta}{4 \abs{\Oo}}.  
\]
This completes the proof of Theorem~\ref{thm:LY}.

\section{Twisted Transfer Operators}\label{sec:proof}
In this section we follow the standard ``twisted transfer operator'' approach to studying flows. We will take steps to allow the transfer operator results of the previous section to be applied to the original problem of the meromorphic extension of the correlation function. Throughout this section we suppose that we are given a suspension semiflow $(\Oo,\map,\tau)$ which satisfies the assumptions of the Main Theorem, in particular satisfying the assumptions \eqref{eq:holderass}, \eqref{eq:expandingass}, and \eqref{eq:sumass}. First we show that a condition named \emph{exponential tails} in \cite{avila2005emt} holds also in this setting.
\begin{lem}\label{lem:exptails}
$\int_{\Oo} e^{\sigma \tau(x)} \ dx <\infty$. 
\end{lem}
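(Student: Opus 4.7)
The plan is straightforward: decompose $\int_{\Oo} e^{\sigma\tau(x)}\,dx$ over the partition $\{\omega_{i}\}_{i\in\cI}$, bound each piece crudely by $\abs{\omega_{i}}\cdot e^{\sigma\LL{\infty}{\tau}{\omega_{i}}}$, and then estimate $\abs{\omega_{i}}$ via the change of variables formula for the bijection $\map\colon \omega_{i}\to \map(\omega_{i})\subseteq \Oo$. Finiteness will then follow at once from the summability hypothesis \eqref{eq:sumass}.

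Concretely, I would first split
$$\int_{\Oo} e^{\sigma\tau(x)}\,dx \;=\; \sum_{i\in\cI}\int_{\omega_{i}} e^{\sigma\tau(x)}\,dx \;\leq\; \sum_{i\in\cI}\abs{\omega_{i}}\,e^{\sigma\LL{\infty}{\tau}{\omega_{i}}}.$$
Next, applying the change of variables $y=\map(x)$ on each $\omega_{i}$, where $\map_{i}:=\map|_{\omega_{i}}$,
$$\abs{\omega_{i}} \;=\; \int_{\map(\omega_{i})}\frac{dy}{\abs{\map'(\map_{i}^{-1}(y))}} \;\leq\; \LL{\infty}{\tfrac{1}{\map'}}{\omega_{i}}\cdot\abs{\map(\omega_{i})} \;\leq\; \LL{\infty}{\tfrac{1}{\map'}}{\omega_{i}}\cdot\abs{\Oo}.$$
Combining the two displays and invoking \eqref{eq:sumass} yields
$$\int_{\Oo} e^{\sigma\tau(x)}\,dx \;\leq\; \abs{\Oo}\sum_{i\in\cI}\LL{\infty}{\tfrac{1}{\map'}}{\omega_{i}}\,e^{\sigma\LL{\infty}{\tau}{\omega_{i}}} \;<\; \infty.$$

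There is no serious obstacle: the lemma is essentially a restatement of \eqref{eq:sumass} once the length $\abs{\omega_{i}}$ of each domain piece is controlled by $\LL{\infty}{1/\map'}{\omega_{i}}\cdot\abs{\map(\omega_{i})}$, and the image sizes are uniformly bounded by $\abs{\Oo}$. The only cosmetic point is the implicit use of $\abs{\Oo}<\infty$, which is automatic in the setting of interest (a bounded Poincar\'e section); indeed, the stated conclusion itself forces $\abs{\Oo}<\infty$ since $\tau$ is positive and continuous, so no generality is lost.
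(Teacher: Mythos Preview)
Your proof is correct and follows essentially the same approach as the paper: decompose over the partition, bound each piece by $\abs{\omega_{i}}e^{\sigma\LL{\infty}{\tau}{\omega_{i}}}$, use $\abs{\omega_{i}}\leq \LL{\infty}{1/\map'}{\omega_{i}}\abs{\Oo}$, and apply \eqref{eq:sumass}. The only difference is that you spell out the change-of-variables justification for the length bound and add the remark about $\abs{\Oo}<\infty$, both of which the paper leaves implicit.
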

\begin{proof}
We estimate $\int_{\Oo} e^{\sigma \tau(x)} \ dx  \leq
\sum_{i\in \cI} \abs{\omega_{i}} e^{\sigma \LL{\infty}{\tau}{\omega_{i}}}$. Since also we have that $\abs{\omega_{i}} \leq \LL{\infty}{1/\map'}{\omega_{i}} \abs{\Oo} $ then the supposition \eqref{eq:sumass} implies the lemma.
\end{proof}

For all $t\geq 0$ let $A_{t} := \{(x,s) \in \Oo_{\tau} : s+t \geq \tau(x)\}$ and $B_{t}:= \Oo_{\tau  } \setminus A_{t}$. Hence we may write
\begin{equation}\label{eq:AandB}
\mu( u \cdot v\circ \flo{t} )  = \mu( u \cdot v\circ \flo{t} \cdot  \indicator{A_{t}})  + \mu( u \cdot v\circ \flo{t} \cdot  \indicator{B_{t}} ).
\end{equation}
Exponential decay for the second term is  simple to estimate. 
\begin{lem}\label{lem:simplepart}
Exists $C<\infty$ such that 
$\abs{    \mu( u \cdot v\circ \flo{t} \cdot  \indicator{B_{t}} ) } \leq  C \abs{u}_{\infty} \abs{v}_{\infty}  e^{-\sigma t}$ for all $u, v: \Oo_{\tau} \to \bC$ bounded and $t\geq 0$.
\end{lem}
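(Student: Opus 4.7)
The plan is to exploit the explicit product structure of the invariant measure $\mu=\nu\otimes\operatorname{Leb}/\nu(\tau)$ to reduce the statement to a one-dimensional exponential tail estimate on $\tau$, which is provided (essentially) by the preceding Lemma~\ref{lem:exptails}.

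First I would pull out the sup norms: since $|u\cdot v\circ\flo{t}|\leq |u|_{\infty}|v|_{\infty}$, it suffices to control $\mu(\indicator{B_{t}})$, and by Fubini
\[
\mu(\indicator{B_{t}})
= \frac{1}{\nu(\tau)}\int_{\Oo}\int_{0}^{\tau(x)}\indicator{\{s+t<\tau(x)\}}\,ds\,d\nu(x)
= \frac{1}{\nu(\tau)}\int_{\Oo} \max(0,\tau(x)-t)\,d\nu(x).
\]
So the job reduces to showing that this last integral decays like $e^{-\sigma t}$.

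The second step is the elementary pointwise inequality
\[
\max(0,\tau(x)-t)\,e^{\sigma t}
= e^{\sigma\tau(x)}\cdot\bigl(\tau(x)-t\bigr)_{+}\,e^{-\sigma(\tau(x)-t)}
\leq \frac{1}{e\sigma}\,e^{\sigma\tau(x)},
\]
using $\sup_{r\geq 0} r e^{-\sigma r}=1/(e\sigma)$. Multiplying by $e^{-\sigma t}$ and integrating against $\nu$,
\[
\int_{\Oo}\max(0,\tau(x)-t)\,d\nu(x)\;\leq\;\frac{e^{-\sigma t}}{e\sigma}\int_{\Oo}e^{\sigma\tau(x)}\,d\nu(x).
\]

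The third step is to observe that $\int_{\Oo}e^{\sigma\tau}\,d\nu<\infty$. The density $d\nu/dx$ lies in $\Aspace$ (this is the absolutely continuous invariant density whose existence is delivered by the quasi-compactness proved in Section~\ref{sec:gbv}), and since $\Aspace\subset\mathbf{L^{\infty}}(\Oo)$, the density is bounded, so
\[
\int_{\Oo}e^{\sigma\tau(x)}\,d\nu(x)\;\leq\;\bigl\|\tfrac{d\nu}{dx}\bigr\|_{\infty}\int_{\Oo}e^{\sigma\tau(x)}\,dx,
\]
which is finite by Lemma~\ref{lem:exptails}. Combining the three steps yields the claim with
\[
C\;=\;\frac{1}{\nu(\tau)\,e\sigma}\,\bigl\|\tfrac{d\nu}{dx}\bigr\|_{\infty}\int_{\Oo}e^{\sigma\tau(x)}\,dx.
\]

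There is no real obstacle here beyond a careful choice of the exponential inequality in step two; the only place a reader might hesitate is the appeal to boundedness of $d\nu/dx$, so I would make sure to flag explicitly that $\Aspace\subset\mathbf{L^{\infty}}$ (already noted just after \eqref{eq:defnorm}) is what is being used, together with Lemma~\ref{lem:exptails} to convert the Lebesgue integrability of $e^{\sigma\tau}$ into $\nu$-integrability.
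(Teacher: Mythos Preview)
Your proof is correct and follows essentially the same route as the paper: bound by $\mu(\indicator{B_t})$, compute this as an integral of $(\tau-t)_{+}$, dominate pointwise by a multiple of $e^{\sigma(\tau-t)}$, and invoke Lemma~\ref{lem:exptails}. The only cosmetic differences are that the paper passes to Lebesgue measure immediately (absorbing the density into the constant $C$) whereas you keep $\nu$ and explicitly invoke $\Aspace\subset\mathbf{L^{\infty}}$ at the end, and you use the slightly sharper inequality $r e^{-\sigma r}\leq 1/(e\sigma)$ in place of the paper's $r\leq \sigma^{-1}e^{\sigma r}$.
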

\begin{proof}
Since $\mu$ is given by a formula in terms of the measure $\nu$ which  is absolutely continuous with respect to Lebesgue there exists  $C<\infty$ such that, letting $D_{t} := \{x\in \Oo: \tau(x)-t>0\}$, we have 
\begin{equation}\label{eq:janice}
\abs{    \mu( u \cdot v\circ \flo{t} \cdot  \indicator{B_{t}} ) } \leq C \abs{u}_{\infty} \abs{v}_{\infty}   \int_{\Oo} ( \tau(x)-t) \cdot \indicator{D_{t}}(x)  \ dx
\end{equation} 
for all $t\geq 0$.
For $y\in \bR$ we define $k(y)$ equal to $y$ if $y\geq 0$ and equal to $0$ otherwise. 
This definition  means that $   ( \tau(x)-t) \cdot \indicator{D_{t}}(x) \leq k( \tau(x)-t)  $.
Since $\ln y \leq y$ for all $y>0$ it follows that $\ln ( \sigma y) = \ln \sigma + \ln y \leq \sigma y$ and so $y\leq \sigma^{-1} e^{\sigma y}$ for all  $y>0$. The case $y\leq 0$ is simple and so we have shown that $k(y) \leq \sigma^{-1} e^{\sigma y}$ for all  $y\in \bR$.
This means that 
\[
(\tau(x)-t) \cdot \indicator{D_{t}} (x) \leq  \sigma^{-1} e^{\sigma (\tau(x)-t)}, \quad \text{ for all $x\in \Oo$}.
\]
 We conclude using the above with \eqref{eq:janice} since $\int e^{\sigma \tau (x) }\ dx <\infty$ by Lemma~\ref{lem:exptails}.
\end{proof}

In order to proceed we must estimate the other term in \eqref{eq:AandB} and so it is convenient to define
\begin{equation}\label{eq:defrho}
\rho(t):= \mu( u \cdot v\circ \flo{t} \cdot  \indicator{A_{t}}).
\end{equation}
Note that $\abs{  \mu( u \cdot v\circ \flo{t} \cdot  \indicator{A_{t}})  } \leq \abs{u}_{\infty} \abs{v}_{\infty} $ for all $t\geq 0$.
For all $z\in \bC$ such that $\Re(z)>0$ we consider the Laplace transform of the above function
\begin{equation}\label{eq:defLaplace}
\hat\rho(z):= \int_{0}^{\infty} e^{-zt} \rho(t) \ dt.
\end{equation}
Additionally for any $u:\Oo_{\tau} \to \bC$ and $z\in \bC$ let
\begin{equation}\label{eq:defflattening}
\hat u_{z}(x):= \int_{0}^{\infty} e^{-zs} u(x,s) \ ds
\end{equation}
for all $x\in \Oo$. Furthermore for all $n\in \bN$ let $\tau_{n} := \sum_{k=0}^{n-1} \tau\circ \map^{k}$.  
Since the invariant measure $\nu$  is absolutely continuous with respect to Lebesgue there exists a density $h_{0} \in \Lspace{1}{\Oo}$ such that $
\mu (\eta) = \int_{\Oo} \int_{0}^{\tau(x)} \eta(x,s)\ ds \  h_{0}(x) \ dx
$
for all bounded $\eta:\Oo_{\tau} \to \bC$.
As in \cite{Po,Pol99,baladi2005edc,avila2005emt} we have the following representation of the Laplace transform in terms of an infinite sum.

\begin{lem}\label{lem:sum}
For all $z\in \bC$ such that $\Re(z)>0$ and all $\abs{u}_{\infty}<\infty$, $\abs{v}_{\infty}<\infty$
\[
\hat\rho(z) = \sum_{n=1}^{\infty} \int_{\Oo} (h_{0}\cdot \hat u_{-z}\cdot e^{-z\tau_{n}}\cdot \hat v_{z} \circ \map^{n})(x)   \ dx.
\]
\end{lem}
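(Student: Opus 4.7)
The plan is to decompose $A_t$ according to how many returns of the base map $\map$ the orbit undergoes during time $t$, and then evaluate the Laplace transform term by term via an elementary change of variables. Concretely, for each $n \geq 1$ I would set $A_t^n := \{(x,s) \in \Oo_\tau : \tau_n(x) \leq s+t < \tau_{n+1}(x)\}$. Since $\tau>0$ these sets form a partition of $A_t$ modulo a Lebesgue-null set, and on $A_t^n$ a direct iteration of the defining formula \eqref{eq:defphi} gives the explicit expression $\flo{t}(x,s) = (\map^n(x),\, s+t-\tau_n(x))$. Using the representation of $\mu$ through the density $h_0$ this yields
\[
\rho(t) = \sum_{n=1}^{\infty} \int_\Oo h_0(x) \int_0^{\tau(x)} u(x,s)\, v(\map^n(x),\, s+t-\tau_n(x))\, \indicator{A_t^n}(x,s)\, ds\, dx.
\]

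The next step is to multiply by $e^{-zt}$, integrate in $t$, and interchange the sum with the outer integrals. For each fixed $(x,s,t)$ at most one of the indicators $\indicator{A_t^n}(x,s)$ is nonzero, so the absolute integrand on $\Oo \times \bR_+ \times \bR_+$ is dominated by the integrable function $\abs{u}_\infty \abs{v}_\infty e^{-\Re(z)t} h_0(x) \indicator{[0,\tau(x))}(s)$ whenever $\Re(z)>0$; this makes Fubini available. For each fixed $n$ I would then substitute $r := s+t-\tau_n(x)$ in the innermost $t$-integral. Since $n\geq 1$ forces $\tau_n(x) \geq \tau(x) > s$, the condition $t \geq 0$ is automatic and the range of $r$ becomes exactly $[0, \tau(\map^n x))$, so the $t$-integral collapses to $e^{-z\tau_n(x)}\, e^{zs}\, \hat v_z(\map^n(x))$. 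The remaining $s$-integral is $\int_0^{\tau(x)} e^{zs} u(x,s)\, ds = \hat u_{-z}(x)$ by \eqref{eq:defflattening}, and the claimed representation falls out.

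The only genuinely non-routine step is the Fubini justification indicated above, and even this is straightforward once one notices the pointwise disjointness of the pieces $A_t^n$; the rest is an explicit change of variables and bookkeeping near the null set $\{s+t=\tau_n(x)\}$. I therefore expect no serious obstacle beyond identifying the correct partition of $A_t$ and tracking the sign conventions consistently with \eqref{eq:defflattening} (in particular, the appearance of $\hat u_{-z}$ rather than $\hat u_z$ on the base factor).
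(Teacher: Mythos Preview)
Your argument is correct and is essentially the same as the paper's: both decompose according to the number $n$ of returns via the sets $\{(x,s,t):\tau_n(x)\leq s+t<\tau_{n+1}(x)\}$, apply the explicit formula $\flo{t}(x,s)=(\map^n x,\,s+t-\tau_n(x))$ on each piece, and perform the change of variables $t'=s+t-\tau_n(x)$ to factor the integrand into $\hat u_{-z}(x)\,e^{-z\tau_n(x)}\,\hat v_z(\map^n x)$. Your treatment is slightly more explicit about the Fubini step, but otherwise the two proofs coincide.
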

\begin{proof}
Recall that $h_{0}\in \Lspace{1}{\Oo}$ is the density of the $\map$-invariant measure $\nu$.
For all $\Re(z)>0$ 
\[
\begin{split}
\hat\rho(z) &= \int_{0}^{\infty}\int_{\Oo}\int_{0}^{\tau(x)} e^{-zt} u(x,s) v\circ \flo{t}(x,s) \indicator{A_{t}}(x,s) h_{0}(x) \ ds \ dx \ dt\\
&= \sum_{n=1}^{\infty} \int_{\Oo}\int_{0}^{\tau(x)}  \int_{\tau_{n}(x)-s}^{\tau_{n+1}(x) -s} e^{-zt} u(x,s) v\circ \flo{t}(x,s)  h_{0}(x) \ dt \ ds \ dx.
\end{split}
\]
We change variables letting $t' = t - \tau_{n}(x) + s$ and note that when $t\in [\tau_{n}(x)-s, \tau_{n+1}(x)-s]$ then $\flo{t}(x,s) = (\map^{n}x,  t - \tau_{n}(x) + s)$.  This means that
\begin{multline*}
\hat\rho(z)
=  \sum_{n=1}^{\infty} \int_{\Oo} e^{-z\tau_{n}(x)} \left(  \int_{0}^{\tau(x)} e^{zs} u(x,s) \ ds\right) \\ \times \left( \int_{0}^{\tau(\map^{n}x)} e^{-zt'} v(\map^{n}x, t') \ dt'  \right) h_{0}(x) \ dx.
\end{multline*}
Recalling the definition  \eqref{eq:defflattening} for $\hat u_{-z}$ and $\hat v_{z}$ we conclude.
\end{proof}

We now relate 
the sum given by Lemma~\ref{lem:sum} to the \emph{twisted transfer operators}.
For all $z\in \bC$  such that $\Re(z)\in[-\sigma,0]$ let $\xi_{z}: \Oo \to \bC$ be defined as
\begin{equation}\label{eq:defweight}
 \xi_{z}:= {e^{-z\tau} }.
\end{equation}
We consider the map $f:\Oo\to\Oo$ with the weighting $\xi_{z}$.
It is immediate that the assumptions imposed on the semiflow imply that the pair $f$ and $\xi_{z}$ satisfy the assumptions of Theorem~\ref{thm:qc}. Consequently the transfer operator   $\cL_{z}:\Aspace\to\Aspace$ (for convenience we now write $\cL_{z}$ for $\cL_{\xi_{z}}$) and which is given by the formula
\[
\cL_{z}h(x):= \sum_{i\in\cI} \left(\frac{e^{-z\tau} \cdot h}{\map'}\right)\circ \map_{i}^{-1}(x)\cdot \indicator{\map\omega_{i}}(x).
\]
has essential spectral radius strictly less than $1$.
Let $ \cB(\Aspace,\Aspace)$ denote the space of bounded linear operators mapping $\Aspace$ to $\Aspace$.
\begin{lem}\label{lem:meromorphic}
The operator valued function $z\mapsto (\Id - \cP_{z})^{-1} \in \cB(\Aspace,\Aspace)$ is meromorphic on the set $\{z\in \bC: \Re(z)\in[-\sigma,0] \}$.
\end{lem}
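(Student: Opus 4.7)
The plan is to apply the analytic Fredholm theorem to the holomorphic family $z\mapsto\cL_z$. Three ingredients are needed: a uniform essential spectral radius bound $<1$ on the strip, operator-norm holomorphy of $z\mapsto\cL_z$, and existence of at least one $z$ in the domain at which $(\Id-\cL_z)^{-1}$ exists. The first is immediate: for each $z$ with $\Re z\in[-\sigma,0]$, the weight $\xi_z=e^{-z\tau}$ together with $f$ satisfies the hypotheses \eqref{eq:expanding}--\eqref{eq:holder} of Theorem~\ref{thm:qc} (the expanding property is inherited from $f$; the summability \eqref{eq:summing} follows from \eqref{eq:sumass} together with $|\xi_z|\le e^{\sigma\tau}$; and the H\"older condition is exactly \eqref{eq:holderass}). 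Theorem~\ref{thm:qc} then yields an essential spectral radius bounded by
\[
\sup_{i\in\cI}\LL{\infty}{1/f'}{\omega_i}^{\alpha}\LL{\infty}{\xi_z}{\omega_i}\ \le\ \sup_{i\in\cI}\LL{\infty}{1/f'}{\omega_i}^{\alpha}e^{\sigma\LL{\infty}{\tau}{\omega_i}}\ =:\ \lambda_0,
\]
which by \eqref{eq:expandingass} satisfies $\lambda_0<1$ uniformly in $z$ on the strip.

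For operator-norm holomorphy, I would expand $e^{-(z+w)\tau}=e^{-z\tau}\sum_{k\ge 0}(-w\tau)^k/k!$ to obtain the formal Taylor series $\cL_{z+w}=\sum_{k\ge 0}\frac{(-w)^k}{k!}\cL_{\tau^k e^{-z\tau}}$. For $z$ strictly inside the strip, say $\Re z>-\sigma+\eta$ for some $\eta>0$, the polynomial prefactor is absorbed as $\tau^k\le C_k e^{\eta\tau/2}$, so each coefficient $\cL_{\tau^k e^{-z\tau}}$ is a bounded operator on $\Aspace$ by Theorem~\ref{thm:qc} applied with weight parameter $\sigma-\eta/2$. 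The resulting norm bounds together with the factor $1/k!$ yield convergence of the series in $\cB(\Aspace,\Aspace)$ for $|w|$ sufficiently small, hence holomorphy of $z\mapsto\cL_z$ throughout the open strip.

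For the existence of an invertibility point, I would take $z$ with $\Re z$ sufficiently large positive: by ergodicity of $\nu$ the Birkhoff sum $\tau_n/n\to\nu(\tau)>0$, so $|e^{-z\tau_n}|$ decays exponentially on a set of large $\nu$-measure, which forces the spectral radius $r(\cL_z)<1$ and gives $(\Id-\cL_z)^{-1}$ at such $z$ by Neumann series. Applying the analytic Fredholm theorem on a connected open domain covering both the open strip and $\{\Re z\gg 0\}$ then yields meromorphicity of $z\mapsto(\Id-\cL_z)^{-1}$, with poles at the discrete set of $z$ where $1$ is an eigenvalue of $\cL_z$. The main technical obstacle is the holomorphy step: the natural derivative of $\cL_z$ involves the \emph{unbounded} function $\tau$, so operator-norm holomorphy requires extending the Lasota--Yorke estimate of Theorem~\ref{thm:LY} to weights of the form $\tau^k e^{-z\tau}$ with bounds summable against $1/k!$, which in turn leans essentially on $\sigma$ having some slack in \eqref{eq:expandingass}--\eqref{eq:sumass} so that polynomial prefactors can always be absorbed into slightly smaller exponentials.
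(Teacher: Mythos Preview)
Your overall strategy---holomorphy of $z\mapsto\cL_z$, uniform quasi-compactness via Theorem~\ref{thm:qc}, then the analytic Fredholm theorem---is exactly the paper's approach. The paper packages it slightly differently: it first writes $\cL_z=\cK_z+\cA_z$ with $\cK_z$ compact (a finite-rank spectral projection onto the part of the spectrum outside a fixed disk of radius between $\lambda_0$ and $1$) and $r(\cA_z)<1$, then factors $(\Id-\cL_z)=(\Id-\cK_z)(\Id-\cA_z)$ and applies Fredholm only to the compact piece $\cK_z$. Your more explicit Taylor-series argument for holomorphy is a genuine addition; the paper simply asserts that $\cK_z,\cA_z$ depend holomorphically on $z$ without justification, and the unbounded-$\tau$ obstacle you flag is real.

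There is one gap in your invertibility-point step. You propose to take $\Re z\gg 0$, but the H\"older hypothesis \eqref{eq:holderass} is stated only for $\Re z\in[-\sigma,0]$, so you do not know that $\cL_z$ is even bounded on $\Aspace$ for $\Re z>0$, let alone that the family extends holomorphically there. You can avoid leaving the strip entirely: at $z=0$ the operator $\cL_0$ has $1$ as a simple isolated eigenvalue (uniqueness of the a.c.i.m.), and standard analytic perturbation of this eigenvalue gives $\lambda(z)$ holomorphic near $0$ with $\lambda(0)=1$ and $\lambda'(0)=-\nu(\tau)\neq 0$. Hence for $z$ in the open strip sufficiently close to but distinct from $0$, $1\notin\sigma(\cL_z)$ and $(\Id-\cL_z)^{-1}$ exists, which is all you need to rule out the degenerate alternative in the Fredholm dichotomy.
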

\begin{proof}
We know that $\cP_{z} \in \cB(\Aspace,\Aspace)$ has essential spectral radius less than $1$ for all $  \Re(z)\in[-\sigma,0]  $ and so is of the form $\cP_{z} = \cK_{z} + \cA_{z}$ where $\cK_{z}$ is compact,  the spectral radius of $\cA_{z}$ is strictly less than $1$ and $\cK_{z} \cA_{z} = 0$. Furthermore both $z\mapsto \cK_{z} \in  \cB(\Aspace,\Aspace)$ and  $z\mapsto \cA_{z} \in  \cB(\Aspace,\Aspace)$ are holomorphic operator-valued  functions.
Note that 
\[
(\Id - \cP_{z}) = (\Id - \cK_{z}) (\Id - \cA_{z}).
\]
and that $(\Id - \cA_{z})$ is invertible.
By The Analytic Fredholm Theorem $z\mapsto (\Id - \cK_{z})^{-1}$ is meromorphic on the set $\{z\in \bC: \Re(z)\in[-\sigma,0]\}$.
\end{proof}

\begin{lem}
The operator valued function 
$
z\mapsto \sum_{n=1}^{\infty} \cP_{z}^{n}\in \cB(\Aspace,\Aspace)
$
 is meromorphic on the set $\{z\in \bC: \Re(z)\in[-\sigma,0]\}$.
\end{lem}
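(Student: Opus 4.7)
The approach is to rewrite the series via the elementary operator identity
\[
\sum_{n=1}^{\infty} \cP_{z}^{n} = \cP_{z}(\Id - \cP_{z})^{-1} = (\Id - \cP_{z})^{-1} - \Id,
\]
valid wherever the series converges in $\cB(\Aspace,\Aspace)$, and then to invoke Lemma~\ref{lem:meromorphic}. Since $\Id$ is a constant holomorphic operator-valued function, subtracting it preserves meromorphy, so the right-hand side is meromorphic on $\{z \in \bC : \Re(z) \in [-\sigma,0]\}$. The content of the lemma is then the identification of this meromorphic extension with the analytic continuation of the sum.

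The first step is to verify that $z \mapsto \cP_{z} \in \cB(\Aspace,\Aspace)$ is holomorphic on the strip. This follows from the explicit formula \eqref{eq:defL} for $\cP_{z} = \cL_{\xi_{z}}$ with weight $\xi_{z} = e^{-z\tau}$: for each $x \in \Oo$ the function $z \mapsto e^{-z\tau(x)}$ is entire, and the uniform estimates of Theorem~\ref{thm:LY} applied to the formal $z$-derivative (with weight $-\tau\, e^{-z\tau}$, whose relevant norms remain controlled on the strip by \eqref{eq:sumass} and \eqref{eq:holderass}) give operator-norm convergence of the difference quotient.

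The second step is to establish a non-empty region on which the series converges. For $\Re(z)$ large the pointwise bound $\abs{\cP_{z}h} \leq \cP_{\Re(z)}\abs{h}$ gives
\[
\LL{\infty}{\cP_{z}h}{\Oo} \leq \LL{\infty}{h}{\Oo} \sum_{i\in\cI} \LL{\infty}{1/\map'}{\omega_{i}} \LL{\infty}{\xi_{z}}{\omega_{i}},
\]
and the right-hand side tends to $0$ as $\Re(z) \to +\infty$ by dominated convergence in \eqref{eq:sumass}. Combined with the essential spectral radius bound $\lambda$ of Theorem~\ref{thm:qc}, a standard Hennion-type argument then forces the spectral radius of $\cP_{z}$ on $\Aspace$ to be strictly less than $1$ for $\Re(z)$ sufficiently large: any eigenvalue of modulus $\geq \lambda$ is isolated of finite multiplicity with eigenfunction in $\Aspace \subset \Lspace{\infty}{\Oo}$, which is incompatible with $\snorm{\cP_{z}}_{L^{\infty} \to L^{\infty}} < 1$. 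On this half-plane the geometric series converges in $\cB(\Aspace,\Aspace)$ and coincides with $\cP_{z}(\Id - \cP_{z})^{-1}$.

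Finally, the identity principle for meromorphic operator-valued functions identifies the extension given by $(\Id - \cP_{z})^{-1} - \Id$ (meromorphic on the strip by Lemma~\ref{lem:meromorphic}) with the analytic continuation of $\sum_{n=1}^{\infty} \cP_{z}^{n}$ from the half-plane of convergence, proving the lemma. The main obstacle I anticipate is the second step: Theorem~\ref{thm:qc} only bounds the essential spectral radius, so ruling out isolated eigenvalues of modulus $\geq 1$ on $\Aspace$ requires the auxiliary $L^{\infty}$ estimate together with a Hennion-style spectral comparison.
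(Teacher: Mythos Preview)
Your approach is essentially the same as the paper's: both reduce to the identity $\sum_{n\geq 1}\cP_{z}^{n}=(\Id-\cP_{z})^{-1}\cP_{z}$ (equivalently $(\Id-\cP_{z})^{-1}-\Id$) and invoke Lemma~\ref{lem:meromorphic}. The paper's proof is literally that one line; your additional steps (holomorphy of $z\mapsto\cP_{z}$, a half-plane of convergence via an $L^{\infty}$/Hennion comparison, and the identity principle) supply justification for the identification of the series with its continuation that the paper leaves implicit. One caution: your convergence argument works at $\Re(z)$ large, outside the strip $[-\sigma,0]$ where the hypotheses \eqref{eq:holderass}--\eqref{eq:sumass} are stated, so you are tacitly assuming the operator remains well-defined and the Lasota--Yorke estimate persists there; this is true (since $|e^{-z\tau}|\leq 1$ for $\Re(z)\geq 0$ and the $\alpha$-H\"older bound for $e^{-z\tau}/\map'$ only improves), but worth making explicit.
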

\begin{proof}
We note that $ \sum_{n=1}^{\infty} \cP_{z}^{n} =  (\Id - \cP_{z})^{-1} \cP_{z} $ and apply Lemma~\ref{lem:meromorphic}.
\end{proof}

\begin{proof}[Proof of  \citemain]
By Lemma~\ref{lem:simplepart} it suffices to know that $\hat\rho$ admits the relevant meromorphic extension.
Since, as usual for transfer operators, we have that
\[
\int_{\Oo} \cP_{z}^{n}h_{1}(x) \cdot h_{2}(x) \ dx =
\int_{\Oo} h_{1}(x) \cdot h_{2} \circ e^{-z\tau_{n}(x)} \circ \map^{n}(x) \ dx  
\]
the formula for $\hat\rho(z) $  given by Lemma~\ref{lem:sum} means that
\[
\hat \rho(z) = \sum_{n=1}^{\infty} \int_{\Oo} \cL_{z}^{n}(h_{0}\hat u_{-z})(x) \cdot \hat v_{z}(x) \ dx.
\]
This equality was shown to hold for all $\Re(z)>0$. But since the right hand side is meromorphic on the set $\{z\in \bC : \Re(z)\in[-\sigma,0]\}$ we have shown that the left hand side admits such an extension. 
\end{proof}

%\bibliography{../../../Papers/bibliography}
\end{document}